\newtheorem{theorem}{Theorem}[section]
\newtheorem{corollary}[theorem]{Corollary}
\newtheorem{lemma}[theorem]{Lemma}
\newtheorem{proposition}[theorem]{Proposition}
\newtheorem{remark}[theorem]{\sc Remark}
\newproof{proof}{Proof}
\newproof{proofp}{Proof of Proposition \ref{p:main}}
\renewcommand{\int}{{\mathrm{int}}}
\newcommand{\tF}{F^{\pitchfork}}
\newcommand{\bC}{{\mathbb C}}
\newcommand{\bZ}{{\mathbb Z}}
\newcommand{\bA}{{\mathbb A}}
\begin{document}

\begin{frontmatter}

\title
 {Non reduced plane curve singularities with $b_1(F)=0$  and Bobadilla's question}

\author{\sc Dirk Siersma}  

\address{Institute of Mathematics, Utrecht University, 
PO Box 80010, \ 3508 TA Utrecht
The Netherlands.}

\ead{D.Siersma@uu.nl}

%\author{\sc Mihai Tib\u ar}

%\address{Math\' ematiques, UMR 8524 CNRS,
%%Universit\'e Lille 1, \  59655 Villeneuve d'Ascq, France.}

%\email{tibar@math.univ-lille1.fr}

%\subjclass[2000]{32S30, 58K60, 55R55, 32S50}

\begin{abstract}
If the first Betti number of the Milnor fibre of a plane curve singularity is zero,
then the defining function is equivalent to $x^r$.

\end{abstract}

\begin{keyword}
\texttt{ Milnor fibre \sep equisingular \sep 1-dimensional critical locus \sep
Bobadilla's conjecture \sep
\MSC[2010] 14H20 \sep 32S05 \sep 32S15}
\end{keyword}
%\keywords{}

\date{\today}

%\dedicatory{In press}

%\commby{}
\end{frontmatter}
%%% ----------------------------------------------------------------------

%%% ---------------------------------------------------
%\maketitle
%%% ---------------------------------------------------

\setcounter{section}{0}
%\show\thetheorem

\section{Introduction}\label{s:intro}

Let $f: \bC^n \to \bC$ be a holomorphic function germ. What can be said about functions whose Milnor fibre $F$ has the property $b_i(F) = 0$ for all $i \ge 1$ ? If $F$ is connected then $f$   is non-singular and equivalent to a linear function by A'Campo's trace formula. The remaining question: \emph{What happens if $F$ is non-connected~?} is only relevant for non-reduced plane curve singularities.

This question is related to a recent paper \cite{HM}. That paper contains a statement about the so-called Bobadilla conjectures \cite{Bo} in case of plane curves. The invariant $\beta= 0$, used by Massey \cite{Ma}  should imply that the singular set of $f$ is a smooth line.

In this note we give a short topological proof of a stronger statement.

\begin{proposition} \label{p:main}
If the first Betti number of the Milnor fibre of a plane curve singularity is zero,
then the defining function is equivalent to $x^r$. 
\end{proposition}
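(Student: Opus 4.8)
The plan is to reduce the non-reduced, possibly disconnected situation to the connected case already recorded in the introduction, by extracting the largest root of $f$. First I would factor the germ as $f = u\prod_{i=1}^{s} g_i^{m_i}$, with $u$ a unit and the $g_i$ pairwise non-associate irreducibles, and set $d=\gcd(m_1,\dots,m_s)$. Since $\mathcal{O}_{\bC^2,0}$ is a unique factorization domain and the unit $u$ has a holomorphic $d$-th root, I can write $f=h^{d}$ with $h=u^{1/d}\prod_i g_i^{m_i/d}$ \emph{primitive}, that is, with the $\gcd$ of its branch multiplicities equal to $1$.

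The next step is to describe the Milnor fibre $F$ of $f$ in terms of that of $h$. As $f=h^{d}$, for a common Milnor radius the fibre $F=f^{-1}(\delta)\cap B_\e$ is the disjoint union $\bigsqcup_{\zeta^{d}=\delta}\bigl(h^{-1}(\zeta)\cap B_\e\bigr)$ of $d$ clopen pieces, each diffeomorphic to the Milnor fibre $F_h$ of $h$ (the values $\zeta$ have $|\zeta|=|\delta|^{1/d}$ small, and $h$ shares the reduced zero locus of $f$). In particular $b_1(F)=d\cdot b_1(F_h)$, so the hypothesis $b_1(F)=0$ already forces $b_1(F_h)=0$. I would then show that $F_h$ is connected via the Milnor fibration $f/|f|\colon S^3_\e\setminus K\to S^1$: its total space is connected, and the induced homomorphism $H_1(S^3_\e\setminus K)\cong\bZ^{s}\to H_1(S^1)=\bZ$ sends the meridian of the $i$-th branch to the multiplicity $m_i$, so its image is $\gcd(m_i)\bZ=d\bZ$. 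From the exact sequence of the fibration, $\pi_0(F)\cong\bZ/d\bZ$, so $F$ has exactly $d$ components; since $F$ is partitioned into precisely $d$ nonempty clopen pieces, each piece is a single component, i.e. $F_h$ is connected.

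At this point $F_h$ is connected with $b_1(F_h)=0$, hence contractible, so the connected case recalled in the introduction applies: A'Campo's trace formula forces the Lefschetz number of the monodromy to vanish at a critical point, contradicting $\Lambda=1-b_1(F_h)=1$ unless $h$ is a submersion. Therefore $h$ is non-singular and right-equivalent to the coordinate $x$. Choosing a biholomorphism $\psi$ with $h\circ\psi=x$ gives $f\circ\psi=(h\circ\psi)^{d}=x^{d}$, so $f$ is equivalent to $x^{r}$ with $r=d=\gcd(m_i)$.

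The step I expect to be the main obstacle is the component count: making rigorous the identification $\pi_0(F_h)$ via the index computation above, together with the clean splitting $F\cong\bigsqcup^{d}F_h$ and the compatibility of Milnor radii for $f$ and $h$. Once that topology is in place, the passage $b_1(F)=0\Rightarrow b_1(F_h)=0$, the appeal to the connected case through A'Campo's formula, and the final conclusion $f\sim x^{r}$ are all immediate.
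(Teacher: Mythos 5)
Your proof is correct, and its overall skeleton matches the paper's: extract $d=\gcd(m_1,\dots,m_s)$, write $f=h^{d}$ with $h$ primitive so that $F\cong\bigsqcup^{d}F_h$ and hence $b_1(F_h)=0$, prove that $F_h$ is connected, and finish with A'Campo's trace formula ($\Lambda=1\neq 0$ rules out a critical point of $h$ at the origin, so $h\sim x$ and $f\sim x^{d}$). Where you genuinely diverge is in the key connectivity step. The paper proves it with Schrauwen's \emph{network deformations}: deform the branches so that $\hat f_1\cdots\hat f_r=0$ has the maximal number of double points $D[p,q]$, reconstruct the Milnor fibre by covering the $i$-th branch with $m_i$ sheets and gluing in $\gcd(p,q)$ annuli at each crossing, and observe that each crossing with the $k$-th branch connects sheets modulo $m_k$, so $\gcd(m_1,\dots,m_r)=1$ connects everything. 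You instead use the classical Milnor fibration $f/|f|\colon S^3_\e\setminus K\to S^1$ and identify $\pi_0$ of the fibre with the coset space of $\mathrm{im}\bigl(\pi_1(S^3_\e\setminus K)\to\pi_1(S^1)\bigr)$ in $\bZ$, which has index $d$ because the meridian of the $i$-th branch maps to $m_i$; the clopen splitting $F\cong\bigsqcup^{d}F_h$ then forces each piece to be connected. Your route is more self-contained (covering-space theory, Alexander duality and linking numbers, with no appeal to Schrauwen's dissertation), and it delivers both parts (a) and (b) of the paper's Lemma in one stroke; the paper's route buys more refined information, namely an explicit gluing model of $F$ from which one can read off $b_1(F)$ in terms of the double-point data, not merely its vanishing. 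Two small points of care: the Lefschetz number is $1-\mathrm{trace}\,(T_h|_{H_1})$ rather than $1-b_1(F_h)$, which is harmless here only because $H_1(F_h)$ vanishes (the fibre of a curve singularity is homotopy equivalent to a graph, so $H_1$ is free and $b_1=0$ gives $H_1=0$); and your component count takes place in the sphere fibration while the splitting $F\cong\bigsqcup^{d}F_h$ takes place in the tube, so you are implicitly using the standard equivalence of the two fibrations for non-reduced germs, which is legitimate but deserves a citation.
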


\begin{corollary} \label{c:line}
In the above case the singular set is a smooth line and the system of transversal singularities is trivial. 
\end{corollary}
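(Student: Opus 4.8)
The plan is to read off both assertions of the corollary directly from the normal form supplied by Proposition~\ref{p:main}, treating the proposition itself as a black box. By that proposition the hypothesis $b_1(F)=0$ forces the defining function $f$ to be right-equivalent to a monomial $x^r$; since we are in the singular (non-reduced) case, $r\ge 2$, the reduced case $r=1$ giving a smooth curve with nothing to prove. First I would fix a biholomorphic germ $\varphi$ of $(\bC^2,0)$ with $f=x^r\circ\varphi$, and observe that the two quantities I must control — the singular set of $f$ and the transversal singularity type along it — are invariants of right-equivalence, since a biholomorphism of the source carries the critical locus to the critical locus and carries transversal slices to transversal slices. Hence it suffices to verify the two claims for the model $g(x,y)=x^r$ and then transport them back by $\varphi$.

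For the singular set the computation is immediate: $\partial g/\partial x = r\,x^{r-1}$ while $\partial g/\partial y=0$, so the critical locus of $g$ is $\{x=0\}$, the $y$-axis, a smooth line. Applying $\varphi^{-1}$, the singular set of $f$ is the biholomorphic image of a smooth line and is therefore itself a smooth line, which is the first assertion. For the second assertion I would analyse the transversal structure along $\Sigma=\{x=0\}$. At a point $(0,y_0)$ a transversal slice is a two-disc meeting $\Sigma$ transversally; for the model one may take the coordinate slice $\{y=y_0\}$, on which $g$ restricts to the one-variable germ $x\mapsto x^r$, i.e.\ an $A_{r-1}$ singularity. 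Because $g$ is independent of $y$, this transversal germ is literally identical at every point of $\Sigma$: the family of transversal Milnor fibrations is a product over $\Sigma$, with no special points and no transversal monodromy. This is precisely the statement that the system of transversal singularities is trivial; equivalently, $f$ is, up to coordinates, the suspension $A_{r-1}\times\bC$ of a single isolated plane-curve singularity.

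The only point needing genuine care — and the step I expect to be the main obstacle — is conceptual rather than computational: one must pin down that the equivalence furnished by Proposition~\ref{p:main} is of a type (right-equivalence by a biholomorphism of the source) under which \emph{smoothness of the singular set} and \emph{constancy of the transversal type along it} are honest invariants, so that these properties may be transported from the model $x^r$ to $f$. Once this is settled, both conclusions of the corollary drop out at once from the explicit form of $x^r$, and no further estimate or construction is required.
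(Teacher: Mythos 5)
Your proof is correct and is precisely the argument the paper intends: the corollary is stated there without a separate proof as an immediate consequence of Proposition \ref{p:main}, and your verification (computing $\Sing(x^r)=\{x=0\}$, noting the constant transversal type $A_{r-1}$ with trivial monodromy, and transporting both facts back through the coordinate change) is the expected one. Your one point of caution is legitimate but resolves itself from the paper's proof of the proposition, which yields $f=g^d$ with $g$ a submersion, so the equivalence is indeed holomorphic right-equivalence and both invariants transport as you claim.
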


\section{Non-reduced plane curves}

Non-isolated plane curve singularities have been thoroughly studied by Rob Schrauwen in his dissertation \cite{Sch1}.  Main parts of it are published as \cite{Sch2} and \cite{Sch3}. The above Proposition \ref{p:main} is an easy consequence of his work.
\medskip

We can assume that $f = f_1^{m_1}. \cdots . f_r^{m_r}$
(partition in powers of reduced irreducible components).

\begin{lemma}
Let $d = \gcd (m_1,\cdots,m_r)$
\begin{itemize}
\item[(a.)] $F$ has $d$ components, each diffeomorphic to the Milnor fibre $G$ of
$g = g_1^{\frac{m_1}{d}}. \cdots . g_r^{\frac{m_r}{d}}$. The Milnor monodromy of $f$ permutes these components,
\item[(b.)] if $d=1$ then $F$ is connected.
\end{itemize}
\end{lemma}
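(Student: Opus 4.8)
The plan is to combine one elementary algebraic observation with the homotopy exact sequence of the Milnor fibration. After relabelling the reduced branches so that $g_i=f_i$, the two functions are related by the identity $f=g^d$. Writing the Milnor fibre as $F=f^{-1}(\e)\cap B$ for a small regular value $\e$, and fixing a $d$-th root $\delta=\e^{1/d}$ together with $\omega=\exp(2\pi i/d)$, the equation $f(z)=\e$ is equivalent to $g(z)^d=\e$, i.e. $g(z)\in\{\omega^k\delta : 0\le k\le d-1\}$. Hence
\[
 F \;=\; \bigsqcup_{k=0}^{d-1}\bigl(\{g=\omega^k\delta\}\cap B\bigr),
\]
a disjoint union of $d$ nonempty pieces, each diffeomorphic to the Milnor fibre $G$ of $g$ by the local triviality of the Milnor fibration of $g$ over the circle $|w|=\delta$. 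Transporting $\e$ once around $0$ multiplies $\delta$ by $\omega$, so the geometric monodromy of $f$ carries the piece over $\omega^k\delta$ to the piece over $\omega^{k+1}\delta$; thus it permutes the $d$ pieces cyclically. This yields every assertion of (a) \emph{except} the claim that these pieces are exactly the connected components of $F$ — equivalently, that $G$ is connected.

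So the real content is the connectedness statement (b), which I would prove for any germ whose multiplicities are coprime and then apply to $g$ (whose multiplicities $m_i/d$ have gcd $1$ by construction). For this I would use the Milnor fibration $f\colon E\to\bD^*$ over a small punctured disc, with fibre $F$ and total space $E=B\cap f^{-1}(\bD^*)$. Since $E$ is homotopy equivalent to the complement $B\m f^{-1}(0)$ of a complex hypersurface in a ball, it is connected, and the homotopy exact sequence of the fibration reads
\[
 \pi_1(E)\ \stackrel{f_*}{\longrightarrow}\ \pi_1(\bD^*)\ \longrightarrow\ \pi_0(F)\ \longrightarrow\ \pi_0(E)=\ast .
\]
Here $\pi_1(\bD^*)=\bZ$ and the last arrow is trivial, so $\pi_0(F)\cong\coker\bigl(f_*\colon\pi_1(E)\to\bZ\bigr)$; in particular the number of connected components of $F$ equals the index $[\bZ:\im f_*]$.

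It then remains to identify $\im f_*$. For each branch I choose a smooth point of $\{f_i=0\}$ close to the origin and a small meridian loop $\mu_i$ bounding a disc transverse to that branch inside $B$. On such a transverse slice $f_i$ vanishes simply while the other factors are units, so $f=(\mathrm{unit})\cdot f_i^{m_i}$ has winding number $m_i$ about $0$; that is, $f_*(\mu_i)=m_i$. Consequently $\im f_*\supseteq\langle m_1,\dots,m_r\rangle=d\,\bZ$, whence $F$ has at most $d$ components; together with the $d$ disjoint pieces found above this forces exactly $d$ components and shows each piece $G$ is connected, proving (a). Taking $d=1$ gives $\im f_*=\bZ$ and hence $F$ connected, which is (b). The step I expect to demand the most care is the justification of the exact sequence in the displayed form — namely the connectedness of $E$ and the local triviality of the Milnor fibration — since the winding-number computation and the algebraic splitting $f=g^d$ are then routine.
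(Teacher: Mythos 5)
Your proof is correct, but it takes a genuinely different route from the paper's. Part (a) is essentially identical in both: $f=g^d$ splits $F$ into $d$ pieces $\{g=\omega^k\delta\}\cap B$, cyclically permuted by the monodromy. The divergence is in the connectedness claim. The paper follows Schrauwen's network deformation: perturb the reduced branches into general position so they meet only in double points of type $D[p,q]$, assemble $F$ by covering the $i$-th branch with $m_i$ sheets and gluing $\gcd(p,q)$ annuli at each double point, and observe that an intersection of branch $1$ with branch $k$ identifies the $m_1$ sheets modulo $m_k$, so $\gcd(m_1,\dots,m_r)=1$ connects everything. You instead compute $\pi_0(F)$ homotopy-theoretically: from the Milnor--L\^e fibration $f\colon E\to\bD^*$ with $E$ connected you get $\pi_0(F)\cong\bZ/f_*\pi_1(E)$, and the winding number $m_i$ of $f$ along a meridian of the $i$-th branch gives $f_*\pi_1(E)\supseteq d\,\bZ$, hence at most $d$ components; the $d$ nonempty clopen pieces from (a) then force exactly $d$ components and the connectedness of $G$. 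The two facts you flag as delicate (local triviality of the fibration and connectedness of $E\simeq B\setminus f^{-1}(0)$) are standard and hold for arbitrary, in particular non-reduced, germs, so there is no gap. What each approach buys: yours is self-contained, avoids deformation theory and any appeal to Schrauwen's dissertation, and proves the general statement in all dimensions (for any germ $\bC^n\to\bC$ the number of components of the Milnor fibre is the gcd of the multiplicities of its irreducible factors); the paper's cut-and-paste construction is specific to plane curves but yields much finer information, namely an explicit combinatorial model of $F$ from which Betti number formulas such as $b_1(F)=\sum(p+q)\cdot D^0[p,q]-S+d$ can be read off. One small streamlining available to you: since $f_*=d\cdot g_*$, the reverse inclusion $f_*\pi_1(E)\subseteq d\,\bZ$ is immediate, giving the component count exactly, although the clopen pieces are still needed to conclude that $G$ itself is connected.
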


\begin{proof}
(a.) Since $f = g^d$ the fibre $F$ consists of $d$ copies of $G$.\\
(b.) We recall here the reasoning from \cite{Sch1}. Deform the reduced factors $f_i$ into $\hat{f}_i$ such that 
the product  $ \hat{f}_1. \cdots .\hat{f}_r = 0 $ contains the maximal number of  double points (cf. Figure \ref{f:Qpoints}). This is called a network deformation by Schrauwen. The corresponding deformation $\hat{f}$ of $f$  near such a point  has local equation are of the form $x^p y^q = 0$ (point of type $D[p,q]$).

\begin{figure}[htbp]
    \centering
        \includegraphics[width=6cm]{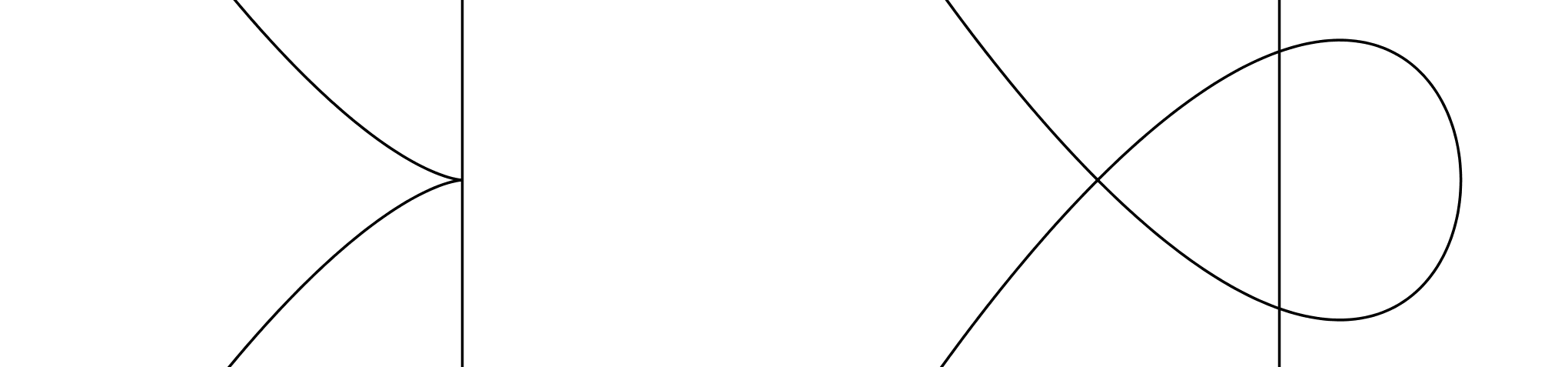}
    \caption{Deformation to maximal number of double points.}
    \label{f:Qpoints}
\end{figure}

Near  every branch $\hat{f}_i=0$ the Milnor fibre is a $m_i$-sheeted covering of the zero-locus, except in the $D[p,q]$-points.  We construct the Milnor fibre $F$ of $f$ starting with $S=\sum m_i$ copies of the affine line $\bA$. 
Cover the {\it i}th branch with $m_i$ copies of $\bA$ and delete $(p+q)$ small discs around the $D[p,q]$-points. Glue in the holes  $\gcd (p,q)$ small annuli (the Milnor fibres of $D[p,q]$). The resulting space is the Milnor fibre $F$ of $f$.

A hyperplane section of a generic at a generic point of $\hat{f}_i=0$ defines a transversal Milnor fibre $\tF_1$.
Start now the construction of $F$ from $\tF_1$, which consists of $m_1$ cyclic ordered points. 
%Take  a reference transversal fibre $\tF_1$, which consits of $m_1$ cyclic ordered points. 
As soon as $f_1=0$ intersects $f_k=0$ it connects the sheets of $f_1=0$  modulo $m_k$.  Since $\gcd (m_1,\cdots,m_r) = 1$  we connect all sheets.
\end{proof}

\begin{proofp}
If $b_1(F)=0$, then also $b_1(G)=0$. The  Milnor monodromy has trace$(T_g)$ = 1. According to
A'Campo's observation  \cite{AC} $g$ is regular: $g=x$.
It follows that $f = x^r$.
\end{proofp}

\section{Relation to Bobadilla's question}

We consider first in any dimension $f: \bC^{n+1} \to \bC$ with a 1-dimensional singular set, see especially the 1991-paper \cite{Si} for definitions, notations and statements. 

 We focus on the group $H_n (F,F^{\pitchfork})$ 
which occurs in two exact sequences on  p. 468 of \cite{Si}:

\[ 0 \rightarrow H_{n-1}^{\mbox{\tiny f}}(F) \rightarrow H_{n-1}( F^{\pitchfork})
\rightarrow H_n(F)  \oplus H_{n-1}^{\mbox{\tiny t}} (F)
\rightarrow 0 \]

\[ 0 \rightarrow H_{n}(F) \rightarrow  H_n (F,F^{\pitchfork}) 
\rightarrow H_{n-1} (F^{\pitchfork}) 
\rightarrow H_{n-1} (F)
\rightarrow 0 \]

Here $F^{\pitchfork}$ is the disjoint  union of the transversal Milnor fibres $F^{\pitchfork}_i$, one for each irreducible branch of the 1-dimensional singular set.\footnote{ $F^{\pitchfork}$ was originally denoted by $F'$. In the second sequence a misprint $n$ in the third term has been changed to $n-1$.} 

Note that $ H_{n}(F) ,\;  H_n (F,F^{\pitchfork})$  and $H_{n-1}( F^{\pitchfork})$ are free groups.  $H_{n-1}(F)$     can have torsion,
 we denote its free part by $H_{n-1}(F)^{\mbox{\tiny f}}$    and its torsion part by $H_{n-1}(F)^{\mbox{\tiny t}}$. All homologies here are taken over $\bZ$, but also other coefficents are allowed.

From both sequences it follows that the $\beta$-invariant, introduced in \cite{Ma} has a 25 years history, since is nothing else than:
\[ \dim H_n(F,F^{\pitchfork}) = b_n - b_{n-1} + \sum \mu_i^{\pitchfork} : = \beta \]

From this definition is immediately clear that $\beta \ge 0$ and that $\beta$ is topological. The topological defintion has as direct consequence:

\begin{proposition}\label{beta=0}
 Let $f: \bC^{n+1} \to \bC$ with a 1-dimensional singular set, then:
\[ \beta = 0 \; \;  \Leftrightarrow  \; \; \chi(F) = 1 + (-1)^n \sum \mu_i^{\pitchfork} \; \;  \Leftrightarrow  \; \;
H_n(F,\bZ) = 0 \;  \mbox{\rm and} \; H_{n-1}(F,\bZ) = \bZ^{\sum \mu_i} \]

\end{proposition}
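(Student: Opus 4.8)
The plan is to deduce everything formally from the second of the two exact sequences on p.~468 of \cite{Si}, together with the freeness statements recorded above and the concentration of the reduced homology of $F$ in degrees $n-1$ and $n$. Write $b_n=\dim H_n(F)$ and $b_{n-1}=\dim H_{n-1}(F)^{\mbox{\tiny f}}$, and abbreviate by $(\star)$ the identity $\beta=b_n-b_{n-1}+\sum_i\mu_i^{\pitchfork}$ obtained above.

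First I would prove that $\beta=0$ is equivalent to the third condition. Since $H_n(F,F^{\pitchfork})$ is a free group, $\beta=\dim H_n(F,F^{\pitchfork})=0$ is the same as $H_n(F,F^{\pitchfork})=0$. Substituting this into
\[ 0 \rightarrow H_{n}(F) \rightarrow H_n(F,F^{\pitchfork}) \rightarrow H_{n-1}(F^{\pitchfork}) \rightarrow H_{n-1}(F) \rightarrow 0 \]
the left-hand injection forces $H_n(F)=0$, while the rest reduces to $0\rightarrow H_{n-1}(F^{\pitchfork})\rightarrow H_{n-1}(F)\rightarrow 0$, that is, an isomorphism $H_{n-1}(F^{\pitchfork})\cong H_{n-1}(F)$. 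As $H_{n-1}(F^{\pitchfork})$ is free of rank $\sum_i\mu_i^{\pitchfork}$, this makes $H_{n-1}(F)$ free of the same rank, in particular torsion free, so $H_{n-1}(F)=\bZ^{\sum_i\mu_i^{\pitchfork}}$. Conversely, inserting $b_n=0$ and $b_{n-1}=\sum_i\mu_i^{\pitchfork}$ into $(\star)$ gives $\beta=0$.

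Next I would treat the Euler characteristic condition. By the connectivity bound for the Milnor fibre of a one-dimensional singular set (see \cite{Si}), $F$ is connected and its reduced homology is concentrated in degrees $n-1$ and $n$; hence $\chi(F)=1+(-1)^{n}b_n-(-1)^{n}b_{n-1}=1+(-1)^{n}(b_n-b_{n-1})$. Using $(\star)$ in the form $b_n-b_{n-1}=\beta-\sum_i\mu_i^{\pitchfork}$ turns this into $\chi(F)=1+(-1)^{n}\beta-(-1)^{n}\sum_i\mu_i^{\pitchfork}$. Since $\sum_i\mu_i^{\pitchfork}=\dim H_{n-1}(F^{\pitchfork})$ is fixed by the transversal data, $\beta$ vanishes if and only if $\chi(F)$ equals the value displayed in the proposition, which gives the remaining equivalence.

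The whole argument is a diagram chase followed by a rank count, so I do not expect a genuine obstacle once the cited ingredients are in place. The one step that deserves care is the passage from the numerical statement $\beta=0$ to the group-theoretic statement $H_n(F,F^{\pitchfork})=0$: this is exactly where the freeness of $H_n(F,F^{\pitchfork})$ is needed, and it is also what lets the second sequence pin down the a priori possible torsion of $H_{n-1}(F)$ and show that it disappears when $\beta=0$.
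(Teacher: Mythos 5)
Your route is the same one the paper intends: the paper gives no separate argument for this proposition (it presents it as a ``direct consequence'' of the identity $\beta=\dim H_n(F,F^{\pitchfork})=b_n-b_{n-1}+\sum\mu_i^{\pitchfork}$ and the two exact sequences), and your diagram chase plus rank count is exactly that argument. In particular your first equivalence is correct and complete: freeness of $H_n(F,F^{\pitchfork})$ turns $\beta=0$ into $H_n(F,F^{\pitchfork})=0$, the second sequence then kills $H_n(F)$ and identifies $H_{n-1}(F)$ with the free group $H_{n-1}(F^{\pitchfork})=\bZ^{\sum\mu_i^{\pitchfork}}$ (so the a priori torsion disappears), and the converse is a substitution into $(\star)$.

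There is, however, a real problem in your last step. Your computation $\chi(F)=1+(-1)^n\beta-(-1)^n\sum\mu_i^{\pitchfork}$ is correct, but at $\beta=0$ it yields $\chi(F)=1-(-1)^n\sum\mu_i^{\pitchfork}$, which is \emph{not} ``the value displayed in the proposition'': the display reads $1+(-1)^n\sum\mu_i^{\pitchfork}$, and the two differ by the sign of the strictly positive term $\sum\mu_i^{\pitchfork}$. What is actually going on is that the proposition as printed has a sign typo, which your own formula corrects. Check it on $f=x^2+y^2$ on $\bC^3$ ($n=2$): there $F\simeq S^1$, so $\chi(F)=0$ and $\beta=0$, while the displayed value would be $1+(-1)^2\cdot 1=2$; your formula gives $1-1=0$. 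Similarly for $f=x^r$ on $\bC^2$ ($n=1$): $\chi(F)=r=1+(r-1)$, not $2-r$. So your mathematics is right, but as written your proof asserts an agreement that does not hold; you should either prove the corrected equivalence $\beta=0\Leftrightarrow\chi(F)=1+(-1)^{n+1}\sum\mu_i^{\pitchfork}$ and flag the discrepancy with the printed statement, or you must not claim the match. (Two smaller points: the claim that $F$ is connected fails for $n=1$, e.g.\ for $x^r$; what your argument really uses, and what is true, is that the \emph{reduced} homology of $F$ is concentrated in degrees $n-1$ and $n$, with $b_{n-1}$ taken in reduced homology. Also, implicit in both your argument and the statement is the identification of the rank of $H_{n-1}(F^{\pitchfork})$ with $\sum\mu_i^{\pitchfork}=\sum\mu_i$, which the paper records just after the proposition.)
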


The  original Bobadilla conjecture C \cite{Bo} was  in \cite{Ma}  generalized to the reducible case as follows: \emph{Does $\beta = 0$ imply that the singular set is smooth?} As consequence of our main Proposition \ref{p:main} we have:
\begin{corollary}
In the curve case $\beta = 0$ implies that the singular set is smooth; and that the function is equivalent to $x^r$.
\end{corollary}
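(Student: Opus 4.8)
The plan is to deduce this corollary directly by combining the topological characterization of $\beta=0$ in Proposition~\ref{beta=0} with the main Proposition~\ref{p:main}. In the curve case the ambient dimension is $n+1=2$, hence $n=1$, and the singular set is one-dimensional, exactly the setting under which Proposition~\ref{beta=0} applies.

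First I would specialize Proposition~\ref{beta=0} to $n=1$. Its rightmost equivalent condition then reads $H_1(F,\bZ)=0$ together with $H_0(F,\bZ)=\bZ^{\sum \mu_i}$; equivalently, the Euler-characteristic condition becomes $\chi(F)=1-\sum \mu_i^{\pitchfork}$. For the present purpose the only piece I need is the vanishing $H_1(F,\bZ)=0$, i.e.\ $b_1(F)=0$. This is precisely the hypothesis of Proposition~\ref{p:main}, so that proposition immediately yields that the defining function is equivalent to $x^r$. The smoothness of the singular set, indeed that it is a smooth line, together with the triviality of the transversal type, then follows from Corollary~\ref{c:line}.

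I do not expect a genuine analytic obstacle, since all the substantive content has already been placed in Proposition~\ref{p:main}; the proof is essentially bookkeeping across the chain Proposition~\ref{beta=0} $\Rightarrow$ Proposition~\ref{p:main} $\Rightarrow$ Corollary~\ref{c:line}. The single point requiring care is the consistent treatment of degree-zero homology when passing through the two exact sequences of \cite{Si}: one must make sure the equivalence ``$\beta=0 \Leftrightarrow H_n(F)=0$'' is read off for the correct group, so that it genuinely delivers $b_1(F)=0$ and not merely a count of the connected components of $F$.
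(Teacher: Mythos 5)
Your proposal is correct and takes essentially the same route the paper intends: Proposition~\ref{beta=0} specialized to $n=1$ gives $H_1(F,\bZ)=0$, hence $b_1(F)=0$, and then Proposition~\ref{p:main} (with Corollary~\ref{c:line}) yields $f\sim x^r$ and the smoothness of the singular line. Your closing caution is well placed but harmless here: the degree-zero groups in the sequences from \cite{Si} are reduced (which is what makes $H_{n-1}(F^{\pitchfork})=\oplus\,\bZ^{\mu_i^{\pitchfork}}$ correct when $n=1$), and your argument only uses the vanishing of $H_1(F,\bZ)$, not the $H_0$ condition.
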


\begin{remark}
In \cite{HM} the first part of this corollary was obtained with the help of L\^e numbers.
\end{remark}

\begin{remark}
From the definition  $\beta = H_n (F,F^{\pitchfork})$   follow direct and short proofs of several statements from \cite{Ma}. 
\end{remark}

An other consequence from \cite{Si} is the composition of surjections:

\[ H_{n-1}(F^{\pitchfork})= \oplus \bZ^{\mu_i} \twoheadrightarrow H_{n-1}(\partial_2 F) = \oplus \frac{\bZ^{\mu_i}}{A_i-I}  \twoheadrightarrow H_{n-1}(F)  \]

From this follows:

\begin{proposition} \label{beta=bound}
If $\dim H_{n-1}(F) = \sum \mu_i$ (upper bound) then 
\begin{itemize}
\item[a.] $ H_{n-1}(\partial_2 F)$ and $ H_{n-1}(F)$ are free and isomorphic to $ \bZ^{\sum \mu_i}$. 
\item[b.] All transversal monodromies $A_i$ are the identity.
\end{itemize}
\end{proposition}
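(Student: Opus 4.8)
The plan is to read everything off the composition of surjections
\[ \bZ^{\sum \mu_i} \twoheadrightarrow H_{n-1}(\partial_2 F) \twoheadrightarrow H_{n-1}(F) \]
displayed just before the statement, together with a small amount of bookkeeping for finitely generated abelian groups. Write $N = \sum \mu_i$ and $P = H_{n-1}(\partial_2 F) = \oplus_i C_i$, where $C_i = \bZ^{\mu_i}/(A_i - I)\bZ^{\mu_i} = \coker (A_i - I)$ and the leftmost map is the component-wise quotient $\bZ^{\mu_i} \twoheadrightarrow C_i$. The hypothesis is that the free rank of $H_{n-1}(F)$ equals $N$, which is precisely the maximal value permitted by a surjection out of $\bZ^N$.

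I would first settle part (a) for $H_{n-1}(F)$. The composed surjection presents $H_{n-1}(F)$ as a quotient of $\bZ^N$, hence as a finitely generated abelian group generated by at most $N$ elements and of free rank exactly $N$. Any such group is free of rank $N$: were its torsion subgroup nontrivial, then for a prime $p$ dividing the torsion the reduction $M/pM$ would satisfy $\dim_{\bZ/p} (M/pM) \ge N+1$, contradicting that $M$ is generated by $N$ elements. Thus $H_{n-1}(F) \cong \bZ^N$.

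The intermediate group $P$ is then squeezed from both sides: as a quotient of $\bZ^N$ it is generated by at most $N$ elements and has rank at most $N$, while its surjection onto $\bZ^N$ forces its rank to be at least $N$. Hence $\rank P = N$, and the freeness argument above gives $P \cong \bZ^N$; both displayed maps are therefore isomorphisms, yielding $H_{n-1}(\partial_2 F) \cong H_{n-1}(F) \cong \bZ^N$ and completing part (a). For part (b), each summand $C_i = \coker(A_i - I)$ of the now-free group $P = \oplus_i C_i$ has free rank $\mu_i - \rank(A_i - I) \le \mu_i$, so the equality $\sum_i \rank C_i = N = \sum_i \mu_i$ forces $\rank(A_i - I) = 0$; an integer matrix of rank zero is the zero matrix, hence $A_i = I$ for every $i$.

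The computation is elementary, and the only genuinely delicate point is the passage from \emph{free rank $N$ and generated by $N$ elements} to \emph{free of rank $N$}: this is where potential torsion in $H_{n-1}(\partial_2 F)$ and $H_{n-1}(F)$ must be excluded, and it is the one step that really invokes the structure theorem (via the mod-$p$ reduction above) rather than bare rank counting. Everything else is rank arithmetic propagated along the two surjections.
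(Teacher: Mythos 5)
Your proof is correct and takes essentially the same route as the paper: the paper states the proposition as an immediate consequence of the displayed composition of surjections $\bZ^{\sum \mu_i} \twoheadrightarrow H_{n-1}(\partial_2 F) \twoheadrightarrow H_{n-1}(F)$, which is exactly the rank-and-generator bookkeeping you carry out. The paper leaves the details implicit; your mod-$p$ exclusion of torsion and the rank count forcing $A_i - I = 0$ are precisely the steps it takes for granted.
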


The second part of \cite{Ma} contains an elegant statement about $\beta=1$  via the A'Campo trace formula.
Also the reduction of the generalized Bobadilla conjecture to the (irreducible) Bobadilla conjecture.
As final remark: The great work (the irreducible case) has still has to be done!
Together with the  L\^e-conjecture this seems to be an important question in the theory of hypersurfaces 1-dimensional singular sets.\\

\section*{References}

\end{document}